\newcommand{\noun}[1]{\textsc{#1}}
\numberwithin{equation}{section}
\numberwithin{figure}{section}
\theoremstyle{plain}
\newtheorem{thm}{\protect\theoremname}
\theoremstyle{remark}
\newtheorem{rem}[thm]{\protect\remarkname}
\theoremstyle{plain}
\newtheorem{prop}[thm]{\protect\propositionname}
\theoremstyle{plain}
\newtheorem{cor}[thm]{\protect\corollaryname}
\providecommand{\corollaryname}{Corollary}
\providecommand{\propositionname}{Proposition}
\providecommand{\remarkname}{Remark}
\providecommand{\theoremname}{Theorem}
\begin{document}
\title{\noun{a geometrical interpretation of okubo spin group}}
\author{Daniele Corradetti $^{\dagger}$, Francesco Zucconi $^{\ddagger}$}
\begin{abstract}
In this work we define, for the first time, the affine and projective
plane over the real Okubo algebra, showing a concrete geometrical
interpretation of its Spin group. Okubo algebra is a flexible, composition
algebra which is also a not unital division algebra. Even though,
Okubo algebra has been known for more than 40 years, we believe that
this is the first time the algebra was used for affine and projective
geometry. After showing that all axioms of affine geometry are verified,
we define a projective plane over Okubo algebra as completion of the
affine plane and directly through the use of Veronese coordinates.
We then present a bijection between the two constructions. Finally
we show a geometric interpretation of $\text{Spin}\left(\mathcal{O}\right)$
as the group of collineations that preserve the axis of the plane. 

\medskip{}

\textbf{Msc}: 17A20; 17A35; 17A75; 51A35. 
\end{abstract}

\maketitle

\section{Introduction and motivations}

Unital composition algebras, also known as Hurwitz algebras, are indispensable
tools in physics. Even more so the four division Hurwitz algebras
that, up to isomorphisms are the Reals $\mathbb{R}$, Complex $\mathbb{C}$,
Quaternions $\mathbb{H}$ and Octonions $\mathbb{O}$. All of them
share the same propriety of \emph{alternativity}, i.e. 
\begin{equation}
\left(x*x\right)*y=x*\left(x*y\right),
\end{equation}
that, combined with the \emph{composition} of the norm, i.e. 
\begin{equation}
n\left(x*y\right)=n\left(x\right)n\left(y\right),
\end{equation}
allows interesting interplays between physics, geometry and algebra.
A wonderful example of those interplays is represented by the Jordan
algebras over three by three matrices with coefficient in the Hurwitz
algebras, whose rank one idempotents are in bijective correspondence
with points on the projective planes\cite{Jordan} and whose automorphisms
are related to collineations of such plane giving rise to numerous
realization of real forms of exceptional Lie groups and algebras \cite{RealF}.

In this work we will start to analyze the geometrical proprieties
of Okubo algebras that are still composition algebras, even though
not unital, and enjoy\emph{ flexibility }instead of alternativity,
i.e. 
\begin{equation}
\left(x*y\right)*x=x*\left(y*x\right).
\end{equation}
The real version of Okubo algebra is even a division algebra and it
was studied by Susumo Okubo while he was working on quark and Gellmann
matrices in the search of an algebra that had $\text{SU}\left(3\right)$
as automorphisms instead of $\text{G}_{2}$ as in the case of Octonions\cite{Okubo95}.
Even more interestingly, Okubo discovered that a deformation of the
product of this algebra gave back the usual Octonionic product\cite{Okubo 1978,Okubo 78c,Okubo1978b}.

More recent works \cite{KMRT} with the joint efforts of Elduque and
Myung \cite{Elduque 91,Elduque 93,Elduque Myung 90,ElduQueAut} helped
to clarify the context of Okubo algebras that are part of a larger
set of algebras called \emph{symmetric composition algebra }that hosts
togheter with Okubo algebras and its split version all the para-Hurwitz
algebras, i.e. all the algebras obtained from Hurwitz algebras defining
a new product 
\begin{equation}
x*y=\overline{x}\overline{y}.
\end{equation}
Later on some algebrical features of Okubo algebra were investigated
in order to construct all real form of exceptional Lie groups obtaining
a new version of Freudenthal Magic Square\cite{EldMS1,EldMS2}. Nevertheless,
a purely geometric construction over Okubo algebra was, to our knowledge,
never fully investigated nor even defined. On the other side, in \cite{ElDuque Comp}
Elduque gave a description and a definition of the Spin group over
Okubo algebras $\text{Spin}\left(\mathcal{O}\right)$ and their triality
algebras $\mathfrak{tri}\left(\mathcal{O}\right)$. Here we will give
a geometric interpretation of $\text{Spin}\left(\mathcal{O}\right)$,
starting from the definition of the Okubic affine plane.

In this work we define for the first time the affine and the projective
plane over the real Okubo algebra and investigate some of their geometrical
features and interplays. In section $2$ we introduce the real Okubo
algebra, while in section $3$ and $4$we introduce the affine and
projective plane defining points, lines and elliptic and hyperbolic
polarities. In section $5$ we analyze collineations of the plane
with a special attention to the triality collineation and, finally,
we give a geometric interpretation of $\text{Spin}\left(\mathcal{O}\right)$,
which was already defined algebrically in \cite{ElduQueAut}, as the
group of collineations that preserve the axis of the plane.

\section{Okubo Algebras and Octonions}

Following \cite{Okubo 1978} and \cite{Elduque Myung 90}, we define
the Okubo Algebra $\mathcal{O}$ as the set of three by three Hermitian
traceless matrices over the complex numbers $\mathbb{C}$ with the
following product 
\begin{equation}
x*y=\mu\cdot xy+\overline{\mu}\cdot yx-\frac{1}{3}\text{Tr}\left(xy\right),
\end{equation}
where $\mu=\nicefrac{1}{6}\left(3+\text{i}\sqrt{3}\right)$ and the
juxtaposition is the ordinary associative product between matrices.
It is easy to see that the resulting algebra is not unital, not associative
and not alternative. Nevertheless, $\mathcal{O}$ is a\emph{ flexible
}algebra, i.e. 
\begin{equation}
x*\left(y*x\right)=\left(x*y\right)*x,
\end{equation}
which will turn out to be more useful property than alternativity
in the definition of the projective plane. Even though the Okubo algebra
is not unital, it does have idempotents, i.e. $e*e=e$, such as 
\begin{equation}
e=\left(\begin{array}{ccc}
2 & 0 & 0\\
0 & -1 & 0\\
0 & 0 & -1
\end{array}\right),
\end{equation}
that together with

\begin{align}
\text{i}_{1}=\left(\begin{array}{ccc}
0 & 1 & 0\\
-1 & 0 & 0\\
0 & 0 & 0
\end{array}\right) & ,\,\,\,\,\text{i}_{2}=\left(\begin{array}{ccc}
0 & 0 & 1\\
0 & 0 & 0\\
-1 & 0 & 0
\end{array}\right),\nonumber \\
\text{i}_{3}=\left(\begin{array}{ccc}
0 & 0 & 0\\
0 & 0 & 1\\
0 & -1 & 0
\end{array}\right) & ,\,\,\,\,\text{i}_{4}=\left(\begin{array}{ccc}
0 & i & 0\\
i & 0 & 0\\
0 & 0 & 0
\end{array}\right),\\
\text{i}_{5}=\left(\begin{array}{ccc}
0 & 0 & i\\
0 & 0 & 0\\
i & 0 & 0
\end{array}\right), & \,\,\,\,\,\,\text{i}_{6}=\left(\begin{array}{ccc}
0 & 0 & 0\\
0 & 0 & i\\
0 & i & 0
\end{array}\right),\text{i}_{7}=\left(\begin{array}{ccc}
0 & 0 & 0\\
0 & i & 0\\
0 & 0 & -i
\end{array}\right),\label{eq:definizione i ottonioniche}
\end{align}
form a basis for $\mathcal{O}$ that has real dimension $8$. 
\begin{rem}
The Okubo algebra we defined on Hermitian traceless matrices and with
real dimension $8$ is called \emph{real}, while the one defined over
$\mathfrak{sl}_{3}\left(\mathbb{C}\right)$ has complex dimension
$8$, is not a division algebra and is called \emph{complex} Okubo
algebra. In this work we will only account for the real Okubo algebra,
which, as shown in Prop.\ref{prop:division} is a division algebra.
In a forthcoming paper we study geometryc structure over a complex
Okubo algebra. 
\end{rem}

Let us consider the quadratic form $n$ over Okubo algebra, given
by 
\begin{equation}
n\left(x\right)=\frac{1}{6}\text{Tr}\left(x^{2}\right).
\end{equation}
It is straightforward to see that this is a \emph{norm} over $\mathcal{O}$
with respect to which the real Okubo algebra turns to be a composition
algebra, i.e. 
\begin{align}
n\left(x*y\right) & =n\left(x\right)n\left(y\right),\\
\left\langle x*y,z\right\rangle  & =\left\langle x,y*z\right\rangle ,\label{eq:symmetric polar}
\end{align}
where $\left\langle \cdot,\cdot\right\rangle $ is the \emph{polar
form} given by 
\begin{equation}
\left\langle x,y\right\rangle =n\left(x+y\right)-n\left(x\right)-n\left(y\right).\label{eq:polar form}
\end{equation}

Algebras that are flexible and composition such as Okubo are called
from \cite[Ch. VIII]{KMRT} \emph{symmetric composition} algebras
and enjoy the following notable relation 
\begin{equation}
x*\left(y*x\right)=\left(x*y\right)*x=n\left(x\right)y.
\end{equation}

For our purposes it is of paramount importance to notice the following
\cite{OkMy80} 
\begin{prop}
\label{prop:division}The Okubo Algebra is a division algebra 
\end{prop}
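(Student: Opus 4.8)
The plan is to exploit the composition property together with the fact that $n$ is anisotropic on the real Okubo algebra. First I would verify that the norm $n(x)=\tfrac16\operatorname{Tr}(x^2)$ is positive definite on $\mathcal{O}$: for a Hermitian traceless matrix $x$, writing $x = x$ (Hermitian), one has $\operatorname{Tr}(x^2)=\operatorname{Tr}(x x^{*})=\sum_{i,j}|x_{ij}|^2 \ge 0$, with equality only when $x=0$. Hence $n(x)>0$ for every nonzero $x\in\mathcal{O}$, so the norm form is anisotropic.

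Next I would use the identity recorded just above the statement, namely $x*(y*x)=(x*y)*x=n(x)\,y$, valid in any symmetric composition algebra. Fix $a\in\mathcal{O}$ with $a\neq 0$; then $n(a)>0$ is an invertible real scalar. Given any $b\in\mathcal{O}$, I want to solve $a*y=b$ and $z*a=b$. From $a*(y*a)=n(a)\,y$ I get that left multiplication $L_a$ composed with right multiplication $R_a$ (in the appropriate order) is $n(a)\,\mathrm{id}$; more precisely, define $R_a(w)=w*a$ and $L_a(w)=a*w$. The identity says $L_a R_a = R_a L_a = n(a)\,\mathrm{id}_{\mathcal{O}}$. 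Since $n(a)\ne 0$, both $L_a$ and $R_a$ are invertible linear maps on the finite-dimensional space $\mathcal{O}$, with $L_a^{-1}=\tfrac{1}{n(a)}R_a$ and $R_a^{-1}=\tfrac{1}{n(a)}L_a$. Therefore the equation $a*y=b$ has the unique solution $y=\tfrac{1}{n(a)}\,(b*a)$, and $z*a=b$ has the unique solution $z=\tfrac{1}{n(a)}\,(a*b)$. This is exactly the statement that $\mathcal{O}$ is a division algebra: all left and right translations by nonzero elements are bijective.

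I expect the only genuine content to be the positive-definiteness of $n$ on the real form, which is what distinguishes it from the split (complex) Okubo algebra; everything else is formal manipulation of the symmetric-composition identity. The one subtlety worth flagging is that $\mathcal{O}$ is nonunital, so "division algebra" here means precisely that $L_a$ and $R_a$ are bijective for all $a\neq 0$ (equivalently, there are no zero divisors, since $a*y=0$ with $a\ne0$ forces $n(a)y=a*(y*a)=0$ hence $y=0$), rather than the existence of two-sided inverses; I would state this convention explicitly to avoid confusion. No appeal to the classification of composition algebras is needed.
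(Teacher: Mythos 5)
Your proof is correct and rests on the same two ingredients as the paper's own argument: the symmetric composition identity $\left(x*y\right)*x=x*\left(y*x\right)=n\left(x\right)y$ and the anisotropy of $n$ on Hermitian traceless matrices. If anything, your version is the more careful one — the paper argues by contradiction from a hypothetical zero divisor and dismisses the case $n\left(d\right)=0$ with an imprecise remark about the diagonal entries $\xi_{1},\xi_{2}$, whereas your observation that $\text{Tr}\left(x^{2}\right)=\sum_{i,j}\left|x_{ij}\right|^{2}>0$ for any nonzero Hermitian $x$ handles all cases at once, and your explicit inverses for $L_{a}$ and $R_{a}$ also establish the paper's subsequent proposition on the unique solvability of $a*x=b$ and $x*a=b$.
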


\begin{proof}
Without any loss of generality, let us suppose that $d\neq0$ is a
left divisor of zero, i.e. $d*x=0$, then 
\[
n\left(d*x\right)=n\left(d\right)n\left(x\right)=0.
\]
But, since the algebra is a division algebra and flexible, we have
also have

\begin{align}
\left(d*x\right)*d & =0=n\left(d\right)x,
\end{align}
and therefore $n\left(d\right)=0$, i.e. $\text{Tr}\left(d^{2}\right)=0$.
But since the element $d\in\mathcal{O}$ is of the form 
\[
d=\left(\begin{array}{ccc}
\xi_{1} & d_{3} & \overline{d}_{2}\\
\overline{d}_{3} & \xi_{2} & d_{1}\\
d_{2} & \overline{d}_{1} & -\xi_{1}-\xi_{2}
\end{array}\right),
\]
$\text{Tr}\left(d^{2}\right)=0$ is not possible with $\xi_{1},$$\xi_{2}\in\mathbb{R}$
and $\xi_{1},\xi_{2}\neq0$. 
\end{proof}
Unfortunately, since $\mathcal{O}$ is not a unital algebra, an element
$x$ does not have defined an inverse. Nevertheless, considering the
existance of the idempotent $e$, and inspired by the identity 
\[
x*\left(e*x\right)=\left(x*e\right)*x=n\left(x\right)e,
\]
we can define $\left(x\right)_{L}^{-1}=n\left(x\right)^{-1}\left(e*x\right)$
and $\left(x\right)_{R}^{-1}=n\left(x\right)^{-1}\left(x*e\right)$
so that 
\[
\left(x\right)_{L}^{-1}*x=x*\left(x\right)_{R}^{-1}=e.
\]

As an implication of this we have the following 
\begin{prop}
Any equation of the kind 
\begin{equation}
a*x=b,\,\,\text{or}\,\,\,\,x*a=b,
\end{equation}
has a unique solution which is respectively given by 
\begin{equation}
x=\frac{1}{n\left(a\right)}b*a,\,\,\,\text{or}\,\,\,\,x=\frac{1}{n\left(a\right)}a*b.
\end{equation}
\end{prop}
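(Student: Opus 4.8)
The plan is to obtain both assertions as immediate consequences of the fundamental identity of symmetric composition algebras recalled above,
\[
x*\left(y*x\right)=\left(x*y\right)*x=n\left(x\right)y,
\]
together with the anisotropy of the norm. The latter is exactly what the proof of Proposition \ref{prop:division} establishes: a nonzero $a\in\mathcal{O}$ cannot satisfy $\text{Tr}\left(a^{2}\right)=0$, hence $n\left(a\right)\neq0$ whenever $a\neq0$. We may thus assume $a\neq0$ (otherwise the right-hand sides are not even defined), so that dividing by $n\left(a\right)$ is legitimate. Recall also that $*$ is $\mathbb{R}$-bilinear, being built from the bilinear operations $xy$, $yx$ and $\text{Tr}\left(xy\right)$.

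For existence I would simply substitute the proposed solutions. Putting $x=\frac{1}{n\left(a\right)}\,b*a$ into the left-hand side of $a*x=b$ and using bilinearity together with the instance $a*\left(b*a\right)=n\left(a\right)b$ of the identity above (take $x\mapsto a$, $y\mapsto b$), one gets $a*x=\frac{1}{n\left(a\right)}\,a*\left(b*a\right)=b$; the verification for $x*a=b$ with $x=\frac{1}{n\left(a\right)}\,a*b$ is symmetric, now invoking $\left(a*b\right)*a=n\left(a\right)b$.

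For uniqueness I would argue that the map $x\mapsto a*x$ is injective. If $a*x=0$, then applying $\left(\,\cdot\,\right)*a$ on the right and using flexibility and the identity $\left(a*x\right)*a=n\left(a\right)x$ gives $n\left(a\right)x=0$, whence $x=0$ since $n\left(a\right)\neq0$; by bilinearity this shows $a*x_{1}=a*x_{2}\Rightarrow x_{1}=x_{2}$. The same reasoning applied to $x\mapsto x*a$, using $a*\left(x*a\right)=n\left(a\right)x$, settles the second equation.

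I do not expect a genuine obstacle here: once the symmetric composition identity is available the whole argument is a couple of lines, and the only delicate point is the non-vanishing of $n\left(a\right)$, which has already been secured by Proposition \ref{prop:division}.
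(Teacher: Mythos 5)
Your argument is correct and is exactly the intended one: the paper states this proposition without proof as an immediate consequence of the symmetric composition identity $\left(a*x\right)*a=a*\left(x*a\right)=n\left(a\right)x$ together with the anisotropy of $n$ (positive definiteness of $\text{Tr}\left(a^{2}\right)$ on nonzero Hermitian traceless matrices), and your substitution-plus-injectivity verification fills in precisely that omitted computation. The only caveat is cosmetic: you lean on ``the proof of Proposition \ref{prop:division}'' for $n\left(a\right)\neq0$, and while the anisotropy claim there is what you need, that proof is worded somewhat circularly, so it is cleaner to cite positive definiteness of the trace form directly.
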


Even though, extremely simple the previous proposition has paramount
geometrical implications as we show in section 3.
\begin{rem}
Since $\mathcal{O}$ is not unital, there is not a canonical involution,
as in Hurwitz algebras, of the type $\overline{x}=\left\langle x,\boldsymbol{1}\right\rangle \boldsymbol{1}-x$
such that $n\left(x\right)=x\overline{x}=\overline{x}x$. Nevertheless,
the idempotent $e$ gives rise to an automorphism of order two, i.e.
an involution, $x\longrightarrow\left\langle x,e\right\rangle e-x$
and an automorphism of order three $x\longrightarrow\tau\left(x\right)=\left\langle x,e\right\rangle e-x*e$,
whose geometrical meaning was partially investigated in \cite{ElDuque Comp}. 
\end{rem}

\subsection{Deformation to Octonions}

The choice of the idempotent $e$ also allows a deformation on the
Okubic product that give rise to the usual octonionic product. Indeed,
defining a new product over $\mathcal{O}$ by 
\begin{equation}
x\cdot y=\left(e*x\right)*\left(y*e\right),
\end{equation}
we obtain that the resulting algebra $\left(\mathcal{O},\cdot,n\right)$
is isomorphic to that of Octonions $\mathbb{O}$, with $e$ as the
unit element. Following \cite{ElduQueAut}, it is easy to show that
since $e*e=e$ and $n\left(e\right)=1$, for every $x\in\mathcal{O}$
the element $e$ acts as a left and right identity, i.e. 
\begin{align}
x\cdot e & =e*x*e=n\left(e\right)x=x,\\
e\cdot x & =e*x*e=n\left(e\right)x=x.
\end{align}
Moreover, since Okubo algebra is a composition algebra, the same norm
$n$ enjoys the following relation 
\begin{equation}
n\left(x\cdot y\right)=n\left(\left(e*x\right)*\left(y*e\right)\right)=n\left(x\right)n\left(y\right),
\end{equation}
which means that $\left(\mathcal{O},\cdot,n\right)$ is a unital composition
algebra of real dimension $8$ and therefore, as noted by Okubo himself
\cite{Okubo 1978,Okubo 78c,ElduQueAut}, is isomorphic to the algebra
of Octonions $\mathbb{O}$. 
\begin{rem}
Under the previous hypothesis the octonionic identity is $1=e$, while
the imaginary units on the Octonions are $\left\{ \text{i}_{1},...,\text{i}_{7}\right\} $
in (\ref{eq:definizione i ottonioniche}). Okubo algebras and Octonionic
algebras are strictly intertwined: the choice of an idempotent on
the Okubo algebra allows the definition of an octonionic product and
of a conjugation that can be explicitly given by 
\begin{align}
x\cdot y & =\left(e*x\right)*\left(y*e\right),\\
\overline{x} & =e*\left(e*\left(e*x\right)\right).
\end{align}
\end{rem}

\subsection{Automorphisms}

The first interest of Okubo in its algebra \cite{Okubo 1978,ElduQueAut}
was given by the fact that the group of automorphisms of $\mathcal{O}$
is $\text{Aut}\left(\mathcal{O}\right)\cong SU\left(3\right)$
and its derivations is $\mathfrak{der}\left(\mathcal{O}\right)\cong\mathfrak{su}\left(3\right)$,
while in the octonionic case the automorphisms and the derivation
led to exceptional group $G_{2\left(-14\right)}$ and $\mathfrak{g}_{2}$.
But since $\mathcal{O}$ is a symmetric composition algebra automorphisms
of $\mathcal{O}$ it also enjoy the property of preserving the norm.
Indeed, let $\varphi$ be an automorphism of $\mathcal{O}$. Then
for $x,y\in\mathcal{O}$, since $\varphi\left(x*y\right)=\varphi\left(x\right)*\varphi\left(y\right)$,
then we have 
\begin{align}
\varphi\left(\left(x*y\right)*x\right) & =\varphi\left(n\left(x\right)y\right),
\end{align}
but on the other side we also have 
\begin{align}
\varphi\left(\left(x*y\right)*x\right) & =\left(\varphi\left(x\right)*\varphi\left(y\right)\right)*\varphi\left(x\right)=\\
 & =n\left(\varphi\left(x\right)\right)\varphi\left(y\right),
\end{align}
and therefore any automorphism of $\mathcal{O}$ preserves the norm,
i.e. 
\begin{equation}
n\left(x\right)=n\left(\varphi\left(x\right)\right),
\end{equation}
and thus belongs to the orthogonal group $O\left(\mathcal{O}\right)$
and $\left\langle x,y\right\rangle =\left\langle \varphi\left(x\right),\varphi\left(y\right)\right\rangle $
which also means that $\text{tr}\left(xy\right)=\text{tr}\left(\varphi\left(xy\right)\right)$.

\section{the affine plane over real okubo algebras}
\begin{center}
\begin{figure}
\centering{}\includegraphics[scale=0.3]{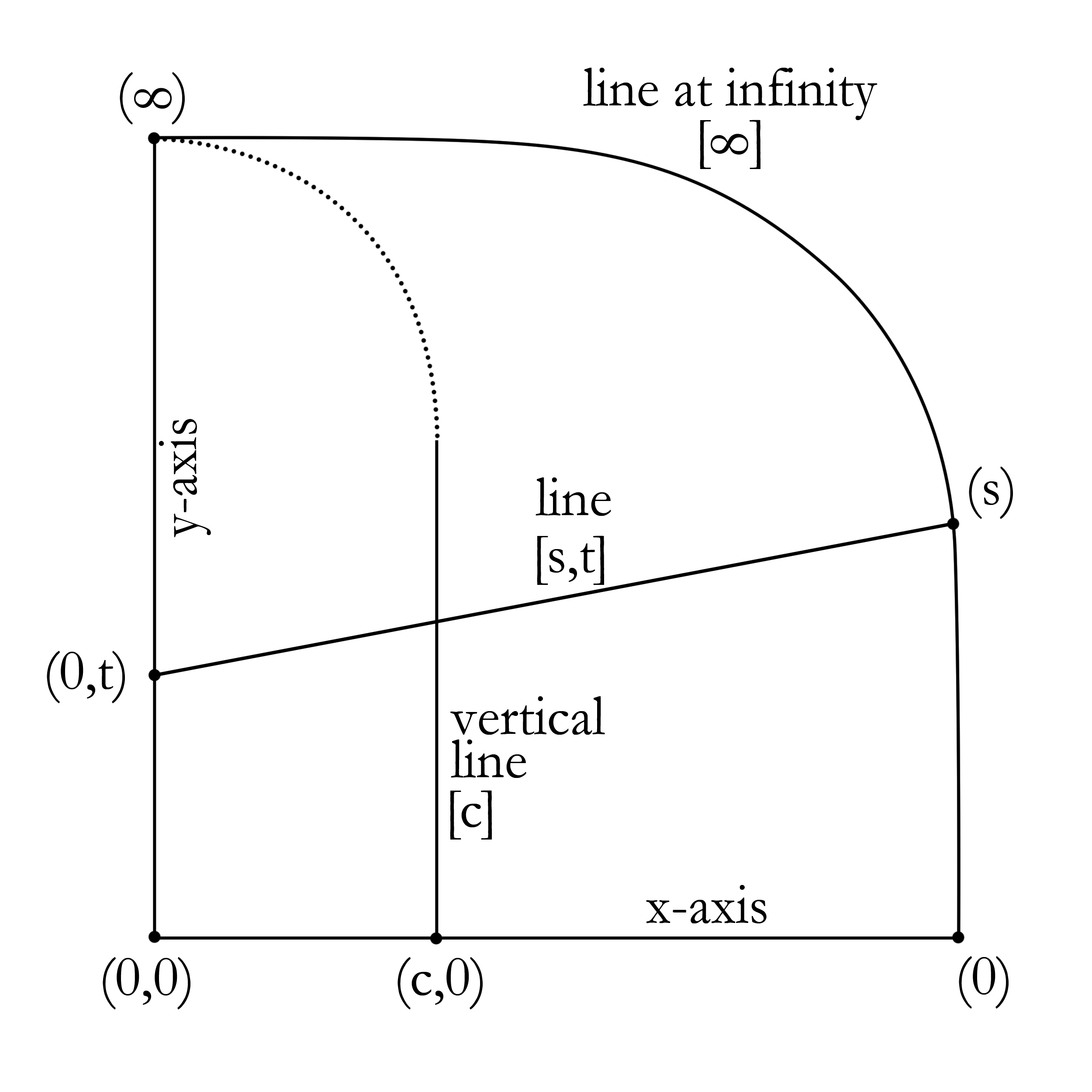}\caption{\label{fig:The affine plane}Representation of the affine plane: $\left(0,0\right)$
represents the origin, $\left(0\right)$ the point at the infinity
on the $x$-axis, $\left(s\right)$ is the point at infinity of the
line $\left[s,t\right]$ of slope $s$ while $\left(\infty\right)$
is the point at the infinity on the $y$-axis and of vertical lines
$\left[c\right]$.}
\end{figure}
\par\end{center}

Following \cite{Compact Projective} we now define Okubic affine plane
$\mathscr{A}_{2}\left(\mathcal{O}\right)$ that we will later complete
and identify with the projective plane $\mathbb{P}^{2}\mathcal{O}$.
A \emph{point} on the affine plane is identified by two coordinates
$\left(x,y\right)$ with $x,y\in\mathcal{O}$, while a \emph{line}
of the affine plane the set $\left\{ \left(x,s*x+t\right):x\in\mathcal{O}\right\} ,$
that we label $\left[s,t\right]$ where $s,t\in\mathcal{O}$ are the
slope and the offset respectively. Vertical lines are identified by
$\left[c\right]$ and denote the set $\left\{ c\right\} \times\mathcal{O}$,
where $c\in\mathcal{O}$ represent the intersection with the $x$
axis.

Since $\mathcal{O}$ is a division algebra we then have the following
properties: 
\begin{enumerate}
\item for any two points $\left(x_{1},y_{1}\right)$ and $\left(x_{2},y_{2}\right)$
there is a unique line joining, namely $\left[s,y_{1}-s*x_{1}\right]$,
where $s$ is determined by 
\[
s*\left(x_{1}-x_{2}\right)=\left(y_{1}-y_{2}\right),
\]
and explicitly found through the use of flexibility 
\[
\left(x_{1}-x_{2}\right)*s*\left(x_{1}-x_{2}\right)=\left(x_{1}-x_{2}\right)*\left(y_{1}-y_{2}\right),
\]
that yields to 
\[
s=\frac{\left(x_{1}-x_{2}\right)*\left(y_{1}-y_{2}\right)}{n\left(x_{1}-x_{2}\right)},
\]
when $x_{1}\neq x_{2}$ and to the line $\left[x_{1}\right]$ when
$x_{1}=x_{2}$. 
\item Two lines $\left[s_{1},t_{1}\right]$ and $\left[s_{2},t_{2}\right]$
of different slope, i.e. $s_{1}\neq s_{2}$, have a unique point of
intersection $\left\{ \left(x,s_{1}*x+t_{1}\right)\right\} $ where
\[
x=\frac{\left(s_{1}-s_{2}\right)*\left(t_{1}-t_{2}\right)}{n\left(s_{1}-s_{2}\right)}.
\]
\item Two lines with the same slope are disjoint and therefore are called
\emph{parallel}. 
\item For each line $\left[s,t\right]$ and each point $\left(x,y\right)$
there is a unique line which passes through $\left(x,y\right)$ and
is parallel to $\left[s,t\right]$, i.e. $\left[s,y-s*x\right]$. 
\end{enumerate}
To achieve a completion $\overline{\mathscr{A}_{2}}\left(\mathcal{O}\right)$
of the affine plane, we add another set of coordinates which will
represent the points at infinity. Indeed, we add a line at infinity
$\left[\infty\right]$, i.e. 
\[
\left[\infty\right]=\left\{ \left(s\right):s\in\mathcal{O}\cup\left\{ \infty\right\} \right\} ,
\]
where $\left(s\right)$ identify the end point at infinity of a line
with slope $s\in\mathcal{O}\cup\left\{ \infty\right\} $. Finally,
we define $\left(\infty\right)$ the point at infinity of $\left[\infty\right]$.
It is easy to verify that this construction preserves the property
of a unique line joining two different points and that two lines intersect
at infinity if and only if they are parallel, i.e. have the same slope
$\left(s\right)$.

Resuming the whole notation, as in Fig.\ref{fig:The affine plane},
we have three set of coordinates that indentify all the points in
the completion of the affine plane, i.e. $\left(x,y\right),\left(s\right)$
and $\left(\infty\right)$. The whole affine plane is encompassed
by a triangle given by three special points: the \emph{origin} $\left(0,0\right)$;
the \emph{0-point at infinity}, i.e. the point $\left(0\right)$ obtained
prolonging the $x$ axis to infinity; finally, the \emph{$\infty$-point
at infinity}, i.e. the point $\left(\infty\right)$ obtained prolonging
the $y$ axis to infinity. We will call $\triangle$ the set made
by those three points, i.e. $\triangle=\left\{ \left(0,0\right),\left(0\right),\left(\infty\right)\right\} .$

\section{Projective plane over the okubo algebra}

We here define the projective plane $\mathbb{P}^{2}\mathcal{O}$ making
use of an \emph{ad hoc} adaptation of the Veronese coordinates introduced
in \cite{Compact Projective,Notes Octo} for the definition of the
Octonionic plane and used in \cite{RealF} for the Bioctonionic Rosenfeld
plane. After a suitable definition of points and lines of the plane
it is straightforward the notion of polarity, which guarantees the
projective ``spirit'' of the construction. Finally, we show that
the resulting projective plane is in fact another way of seeing the
completion of the affine plane $\overline{\mathscr{A}_{2}}\left(\mathcal{O}\right)$.

\subsection{The projective plane }

Let $V\cong\mathbb{\mathcal{O}}^{3}\times\mathbb{R}^{3}$ be a real
vector space, with elements of the form 
\[
\left(x_{\nu};\lambda_{\nu}\right)_{\nu}=\left(x_{1},x_{2},x_{3};\lambda_{1},\lambda_{2},\lambda_{3}\right)
\]
where $x_{\nu}\in\mathcal{O}$, $\lambda_{\nu}\in\mathbb{R}$ and
$\nu=1,2,3$. A vector $w\in V$ is called \emph{Okubic Veronese}
if

\begin{align}
\lambda_{1}x_{1} & =x_{2}*x_{3},\,\,\lambda_{2}x_{2}=x_{3}*x_{1},\,\,\lambda_{3}x_{3}=x_{1}*x_{2}\label{eq:Ver-1-1}\\
n\left(x_{1}\right) & =\lambda_{2}\lambda_{3},\,n\left(x_{2}\right)=\lambda_{3}\lambda_{1},n\left(x_{3}\right)=\lambda_{1}\lambda_{2}.\label{eq:Ver-2-1}
\end{align}
Now we will consider the subspace $H\subset V$ be of Okubo-Veronese
vectors. It is straightforward to see that if $w=\left(x_{\nu};\lambda_{\nu}\right)_{\nu}$
is an Okubic Veronese vector then also $\mu w=\mu\left(x_{\nu};\lambda_{\nu}\right)_{\nu}$
is such vector and therefore $\mathbb{R}w\subset H$. We define the
\emph{Okubic projective plane} $\mathbb{P}^{2}\mathcal{O}$ as the
geometry having this 1-dimensional subspaces $\mathbb{R}w$ as points,
i.e. 
\begin{equation}
\mathbb{P}^{2}\mathcal{O}=\left\{ \mathbb{R}w:w\in H\smallsetminus\left\{ 0\right\} \right\} .
\end{equation}

\subsection{Lines}

The norm on $\mathbb{\mathcal{O}}$ defines the polar form (\ref{eq:polar form})
that can be extended on $V$ as a symmetric bilinear form 
\begin{equation}
\beta\left(v,w\right)=\stackrel[\nu=1]{3}{\sum}\left(\left\langle x_{\nu},y_{\nu}\right\rangle +\lambda_{\nu}\eta_{\nu}\right),
\end{equation}
where $v=\left(x_{\nu};\lambda_{\nu}\right)_{\nu}$ and $w=\left(y_{\nu};\eta_{\nu}\right)_{\nu}$
are Okubo-Veronese vectors in $H\subset V\cong\mathbb{\mathcal{O}}^{3}\times\mathbb{R}^{3}$.
Therefore, we define the \emph{lines} $\ell_{w}$ in the projective
plane $\mathbb{P}^{2}\mathcal{O}$ as the orthogonal spaces of a vector
$w\in H$, i.e. 
\begin{equation}
\ell_{w}=w^{\bot}=\left\{ z\in V:\beta\left(z,w\right)=0\right\} ,
\end{equation}
and, clearly, a point $\mathbb{R}v$ is incident to the line $\ell_{w}$
when $\mathbb{R}v\subseteq$$w^{\bot}$. 
\begin{rem}
It is worth expliciting how the norm $n$ defined over the symmetric
composition algebra $\mathbb{\mathcal{O}}$ is intertwined with the
geometry of the plane. This relations is manifest when we consider
the quadratic form of the bilinear symmetric form $\beta$, i.e.

\begin{equation}
q\left(v\right)\coloneqq\frac{1}{2}\beta\left(v,v\right)=n\left(x_{1}\right)+n\left(x_{2}\right)+n\left(x_{3}\right)+\frac{1}{2}\left(\lambda_{1}^{2}+\lambda_{2}^{2}+\lambda_{3}^{2}\right),
\end{equation}
\end{rem}

\subsection{Polarity}

Given the previous definitions, it is straightforward to define the
\emph{standard elliptic polarity} $\pi$ that maps points into lines
and lines into points through orthogonality, i.e. 
\begin{equation}
\pi\left(w\right)=w^{\perp},\pi\left(w^{\perp}\right)=w,
\end{equation}
where $w$ is an Okubo-Veronese vector in $V$.

\subsection{Correspondence with the affine plane}

The identification of the affine Okubic plane with the projective
can be explicited defining the map that sends a point of the affine
plane to the projective point in $V\cong\mathbb{\mathcal{O}}^{3}\times\mathbb{R}^{3}$,
i.e. 
\begin{align}
\left(x,y\right) & \mapsto\mathbb{R}\left(x,y,x*y;n\left(y\right),n\left(x\right),1\right),\label{eq:corrispondenza1}\\
\left(x\right) & \mapsto\mathbb{R}\left(0,0,x;n\left(x\right),1,0\right),\\
\left(\infty\right) & \mapsto\mathbb{R}\left(0,0,0;1,0,0\right).
\end{align}
Since the Okubo algebra is a symmetric composition algebra, then the
map is well defined. Indeed, from (\ref{eq:Ver-2-1}) we note that
\begin{equation}
n\left(x\right)=\lambda_{2},\,\,\,n\left(y\right)=\lambda_{1},
\end{equation}
and since Okubo is a composition algebra then 
\begin{equation}
n\left(x*y\right)=n\left(x\right)n\left(y\right).
\end{equation}
Since Okubo algebra is flexible we also have that (\ref{eq:Ver-1-1})
are satisfied and 
\begin{align*}
\lambda_{1}x & =y*\left(x*y\right)=n\left(y\right)x,\,\,\\
\lambda_{2}y & =\left(x*y\right)*x=n\left(x\right)y,\\
\lambda_{3}\left(x*y\right) & =x*y,
\end{align*}
and therefore $\mathbb{R}\left(x,y,x*y;n\left(y\right),n\left(x\right),1\right)$
is a point in the Okubic projective plane. As for the converse, if
a point $p$ of coordinates $\left(x_{\nu};\lambda_{\nu}\right)_{\nu}$
is in $\mathbb{P}^{2}\mathcal{O}$ then satisfy (\ref{eq:Ver-2-1})
and has one of the $\lambda_{\nu}$ different from zero. Let us suppose
$\lambda_{3}=1$. Then by (\ref{eq:Ver-2-1}) we have that $x_{3}=x_{1}*x_{2}$
and therefore the point $p$ is of form $\mathbb{R}\left(x_{1},x_{2},x_{1}*x_{2};n\left(x_{2}\right),n\left(x_{1}\right),1\right)$
and, therefore, corresponds to $\left(x_{1},x_{2}\right)$ in the
affine plane. If $\lambda_{3}=0$ and $\lambda_{2}=1$ then the point
is of the form $\mathbb{R}\left(0,0,x;n\left(x\right),1,0\right)$
and therefore corresponds to the affine point $\left(x\right)$, while
if $\lambda_{1}=1$ and $\lambda_{3}=\lambda_{2}=0$ then the point
is $\mathbb{R}\left(0,0,0;1,0,0\right)$ and therefore corresponds
to $\left(\infty\right)$.

The same reasoning shows that the correspondence between affine and
projective lines given by

\begin{align}
\left[s,t\right] & \mapsto\left(t*s,-t,-s;1,n\left(s\right),n\left(t\right)\right)^{\bot},\\
\left[c\right] & \mapsto\left(-c,0,0;0,1,n\left(c\right)\right)^{\bot},\\
\left[\infty\right] & \mapsto\left(0,0,0;0,0,1\right)^{\bot},
\end{align}
is also a bijection.

Finally, we need to verify that the image of a point $\left(x,y\right)$
incident to the line $\left[s,t\right]$ goes into a point of the
projective plane, i.e. $\mathbb{R}\left(x,y,x*y;n\left(y\right),n\left(x\right),1\right)$,
that is incident to the image of the same projective line, i.e. $\left(t*s,-t,-s;1,n\left(s\right),n\left(t\right)\right)^{\bot}$.
By definition, for the image of $\left(x,y\right)$ to be incident
to the image of $\left[s,t\right]$, the following condition must
be satisfied 
\begin{equation}
\left\langle t*s,x\right\rangle -\left\langle t,y\right\rangle -\left\langle s,x*y\right\rangle +n\left(y\right)+n\left(s\right)n\left(x\right)+n\left(t\right)=0.\label{eq:eqretta}
\end{equation}
Noting that 
\begin{equation}
\left\langle s*x,t-y\right\rangle =n\left(s*x+t-y\right)-n\left(s*x\right)-n\left(t-y\right),
\end{equation}
and since (\ref{eq:symmetric polar}), we then have
\begin{align}
\left\langle s*x,t-y\right\rangle  & =\left\langle s*x,t\right\rangle -\left\langle s*x,y\right\rangle \\
 & =\left\langle t,s*x\right\rangle -\left\langle s,x*y\right\rangle \\
 & =\left\langle t*s,x\right\rangle -\left\langle s,x*y\right\rangle ,
\end{align}
and, tehrefore, 
\begin{equation}
\left\langle t*s,x\right\rangle -\left\langle s,x*y\right\rangle =n\left(s*x+t-y\right)-n\left(s*x\right)-n\left(t-y\right).
\end{equation}
 Inserting the latter into (\ref{eq:eqretta}) and noting that $n\left(s\right)n\left(x\right)=n\left(s*x\right)$
we then have that (\ref{eq:eqretta}) is equivalent to 
\begin{align}
n\left(s*x+t-y\right) & =0,
\end{align}
and, since we are in a division composition algebra, to the condition
$s*x+t-y=0$. This means that the above bijection sends affine points
incident to an affine line in projective point incident to the same
projective line. 

\section{Collineations and the Spin group}

\subsection{Collineations}

In analogy to the octonionic case, we are now interested in studying
the collineations on the completion of Okubic affine plane that we
will define as transformations of the plane that send a line into
another line, i.e. $\varphi\left(\left[s,t\right]\right)=\left[s',t'\right]$
with $s,t,s',t'\in\mathbb{\mathcal{O}}$.

Obviously the set of collineations forms a group under composition
and since the identity is a collineation itself, the group is not
void. Moreover, throught the use of the Okubic-Veronese coordinates
an order three element of the group can be easily spotted, i.e. the
\emph{triality collineation} \cite{Compact Projective} given by a
cyclic permutation of the coordinates 
\begin{equation}
\widetilde{t}:\left(x_{1},x_{2},x_{3};\lambda_{1},\lambda_{2},\lambda_{3}\right)\longrightarrow\left(x_{2},x_{3},x_{1};\lambda_{2},\lambda_{3},\lambda_{1}\right).
\end{equation}

\begin{prop}
The triality collineation can be red on the affine plane in the following
way:

\begin{equation}
\widetilde{t}:\begin{cases}
\left(x,y\right) & \longrightarrow\frac{1}{n\left(y\right)}\left(y,x*y\right),\,\,\,y\neq0\\
\left(x\right) & \longrightarrow\frac{1}{n\left(x\right)}\left(0,x\right),x\neq0\\
\left(x,0\right) & \longrightarrow\left(x\right),\,\,\,\\
\left(0\right) & \longrightarrow\left(\infty\right),\\
\left(\infty\right) & \longrightarrow\left(0,0\right).
\end{cases}\label{eq:triality}
\end{equation}
 In particular it induces a collineation $t\colon\mathscr{A}_{2}\left(\mathcal{O}\right)\rightarrow\mathscr{A}_{2}\left(\mathcal{O}\right)$on
the affine plane.
\end{prop}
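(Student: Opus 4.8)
The plan is to verify that the cyclic permutation $\widetilde{t}$ of the Veronese coordinates, when transported through the explicit bijection between $\overline{\mathscr{A}_2}(\mathcal{O})$ and $\mathbb{P}^2\mathcal{O}$ established in the previous section, acts on affine points by the stated formulas, and then to observe that it carries affine lines to affine lines. First I would take a generic affine point $(x,y)$ with $y\neq 0$, push it forward to $\mathbb{R}(x,y,x*y;n(y),n(x),1)$, apply $\widetilde{t}$ to obtain $\mathbb{R}(y,x*y,x;n(x),1,n(y))$, and then renormalize the representative so that the last $\lambda$-coordinate equals $1$: dividing by $n(y)$ (legitimate since $y\neq 0$ and $\mathcal{O}$ is a division algebra, so $n(y)\neq 0$) gives $\mathbb{R}\bigl(\tfrac{1}{n(y)}y,\tfrac{1}{n(y)}(x*y),\ldots;\ldots,\ldots,1\bigr)$, which is the Veronese image of the affine point $\tfrac{1}{n(y)}(y,x*y)$. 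Here one uses that the third coordinate is forced to be the $*$-product of the first two and that the $\lambda$'s are the norms, exactly the consistency conditions checked in the correspondence subsection; so it suffices to read off the first two slots. The remaining four cases — $(x)$ with $x\neq 0$, $(x,0)$, $(0)$, and $(\infty)$ — are handled the same way by writing down the Veronese image, cyclically permuting, and renormalizing; the degenerate cases are where some $\lambda_\nu$ vanishes, so one must track which coordinate becomes the new ``$1$''. For instance $(x,0)\mapsto\mathbb{R}(x,0,0;0,n(x),1)$ permutes to $\mathbb{R}(0,0,x;n(x),1,0)$, which is precisely the Veronese image of $(x)$; and $(\infty)\mapsto\mathbb{R}(0,0,0;1,0,0)$ permutes to $\mathbb{R}(0,0,0;0,1,0)$, whose unique preimage on the affine side is the origin $(0,0)$ — one has to chase these through the case analysis at the end of the correspondence subsection.

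Having established the five formulas, it remains to see that $\widetilde{t}$ is a collineation of $\overline{\mathscr{A}_2}(\mathcal{O})$ and that it restricts to the affine plane. The first point is immediate from the projective side: $\widetilde{t}$ is a linear isomorphism of $V$ that preserves $H$ and preserves the bilinear form $\beta$ (since $\beta$ is the symmetric sum over $\nu$ of the same expression, invariant under cyclic permutation of the index), hence it maps orthogonal complements to orthogonal complements, i.e. lines $\ell_w$ to lines $\ell_{\widetilde{t}w}$; transported back through the line-bijection of the previous section, this says $\widetilde{t}$ sends every line of the completed affine plane to a line. For the restriction to $\mathscr{A}_2(\mathcal{O})$ — i.e. that $t$ is a genuine collineation of the affine plane, not merely of its completion — I would check directly from the five formulas that the line at infinity $[\infty]$ is preserved: from the table, $(0)\mapsto(\infty)$ and $(\infty)\mapsto(0,0)$, so $[\infty]$ is not preserved as a set; rather one observes that $\widetilde{t}$ permutes the three ``coordinate'' lines of the triangle cyclically, and more to the point that any affine line $[s,t]$ (which meets $[\infty]$ in the single point $(s)$) is sent to another affine line. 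Concretely, one takes the projective line $\bigl(t*s,-t,-s;1,n(s),n(t)\bigr)^{\bot}$, permutes its defining vector to $\bigl(-t,-s,t*s;n(s),n(t),1\bigr)^{\bot}$, and matches this against the list of projective images of affine lines to exhibit the new slope and offset; the point $[c]$ of vertical lines is handled analogously.

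The main obstacle I anticipate is purely bookkeeping: correctly handling the degenerate strata where one or two of the $\lambda_\nu$ vanish, so that the renormalization picks out the right coordinate and the limiting behaviour ``$(x,0)\to(x)$'', ``$(0)\to(\infty)$'', ``$(\infty)\to(0,0)$'' comes out consistently with the generic formula $\tfrac{1}{n(y)}(y,x*y)$ as $y\to 0$. One should double-check that the generic formula and the special formulas agree on overlaps — e.g. a point $(x)$ at infinity is the limit of $(x_0,y)$ with $y\to 0$ suitably, and its image $\tfrac{1}{n(x)}(0,x)$ should be the corresponding limit — so that $t$ is well-defined and continuous; but since everything is pinned down by the explicit bijection, this reduces to the finite case analysis already present in the correspondence subsection. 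No genuinely hard algebra is needed beyond flexibility and composition of the norm, both of which are available.
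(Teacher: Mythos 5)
Your proposal is correct and follows essentially the same route as the paper: transport $\widetilde{t}$ through the Veronese correspondence, use $y*\left(x*y\right)=n\left(y\right)x$ and $n\left(x*y\right)=n\left(x\right)n\left(y\right)$ to renormalize the representative, and read off the affine coordinates case by case on the degenerate strata. One small slip: the cyclic permutation sends $\left(0,0,0;1,0,0\right)$ to $\left(0,0,0;0,0,1\right)$, not to $\left(0,0,0;0,1,0\right)$ (the latter is the Veronese image of the point $\left(0\right)$ at infinity, not of the origin); with the corrected vector your conclusion $\left(\infty\right)\mapsto\left(0,0\right)$ still stands.
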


\begin{proof}
If $y\neq0$, the image of $t\left(x,y\right)$ with through the bijection
(\ref{eq:corrispondenza1}) in the projective plane is given by 
\begin{equation}
\frac{1}{n\left(y\right)}\left(y,x*y\right)\longrightarrow\frac{1}{n\left(y\right)}\left(y,x*y,\frac{y*x*y}{n\left(y\right)};\frac{n\left(x*y\right)}{n\left(y\right)},1,n\left(y\right)\right),
\end{equation}
and since $y*x*y=n\left(y\right)x$ and $n\left(x*y\right)=n\left(x\right)*n\left(y\right)$,
then the image of $t\left(x,y\right)$ is in $\mathbb{R}\left(y,x*y,x;n\left(x\right),1,n\left(y\right)\right)$
which is the image of the triality collineation $\widetilde{t}$ of
the projective point $\mathbb{R}\left(x,y,x*y;n\left(y\right),n\left(x\right),1\right)$.
With the same procedure we find the other correspondences. 

\begin{figure}

\begin{centering}
\includegraphics[scale=0.14]{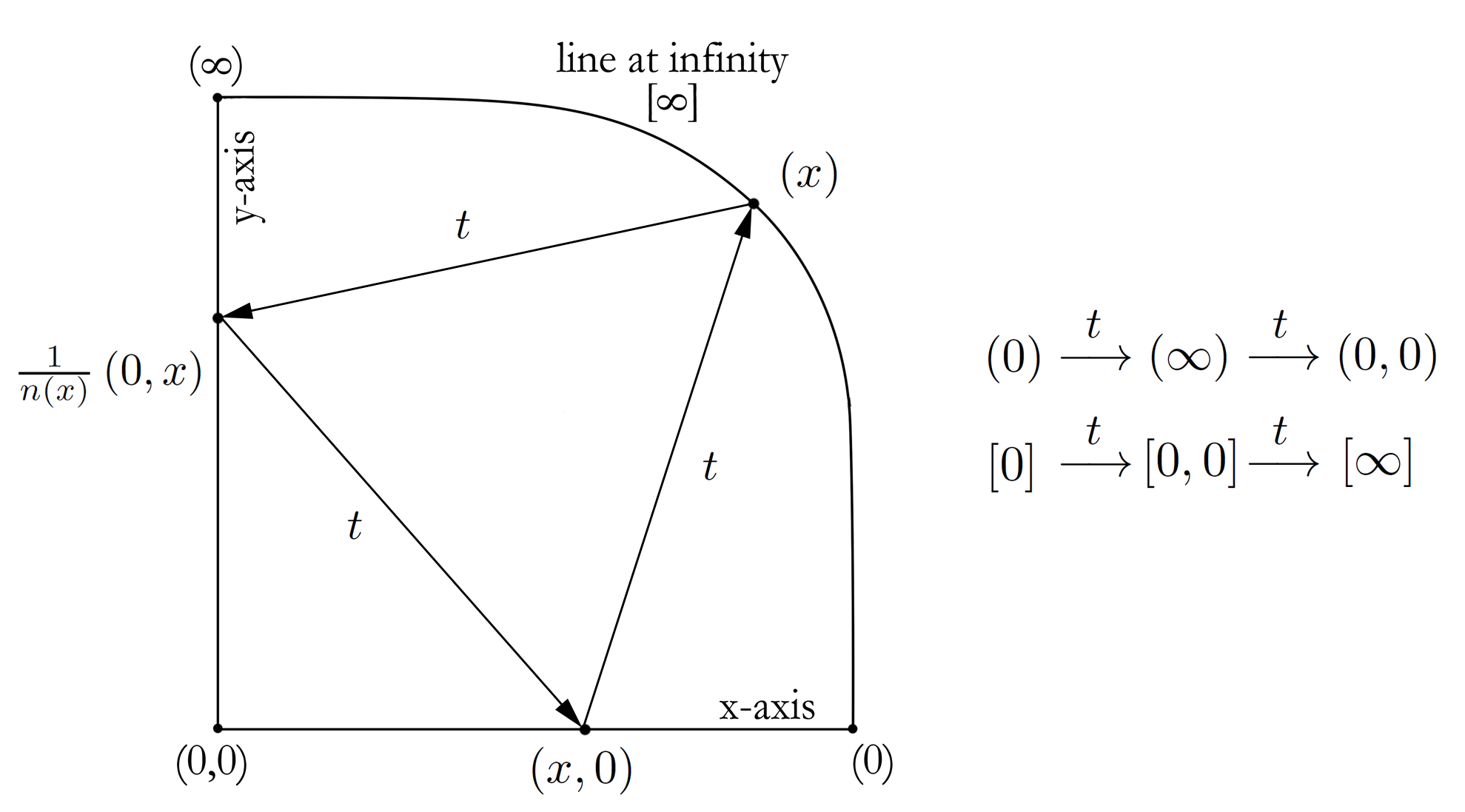}\caption{\label{fig:Action-on-the}Action on the affine plane $\mathscr{A}_{2}\left(\mathcal{O}\right)$
of the triality collineation defined in (\ref{eq:triality}).}
\par\end{centering}
\end{figure}
\end{proof}
\begin{rem}
As shown in Fig (\ref{fig:Action-on-the}) the triality collineation
$t$ sends the line at infinity $\left[\infty\right]$ into the line
$\left[0\right]$, while the $y$ axis $\left[0\right]$ is sent into
the $x$ axis $\left[0,0\right]$; finally the $x$ axis $\left[0,0\right]$
is sent into the line at infinity $\left[\infty\right]$. This phenomenon
is the dual, of what happens, in the reverse order, for the three
points $\left(0,0\right)$,$\left(0\right)$ and $\left(\infty\right)$. 
\end{rem}

\subsection{Recovering the Spin group}

While in the previous section we analysed a subgroup of collineations
that cyclically permutated the $x-$axis, the $y-$axis and the line
at infinity, now we are interested in the subgroup $\Gamma\left(\triangle,\mathbb{\mathcal{O}}\right)$
composed by collineations that fix every point of the triangle $\triangle$,
i.e. $\varphi\left(\left(0,0\right)\right)=\left(0,0\right)$, $\varphi\left(\left(0\right)\right)=\left(0\right)$
and $\varphi\left(\left(\infty\right)\right)=\left(\infty\right)$,
or, in other words, that fix the $x$ and $y$ axis and, therefore,
the line at infinity. This will allow to dive a geometric interpretation
of the algebrical definition given in \cite{ElduQueAut} of the Sping
group over the Okubo algebra, i.e. $\text{Spin}\left(\mathcal{O}\right)$. 
\begin{prop}
The group $\Gamma\left(\triangle,\mathbb{\mathcal{O}}\right)$ of
collineations that fix every point of $\triangle$ are transformations
of this form 
\begin{align}
\left(x,y\right) & \mapsto\left(A\left(x\right),B\left(y\right)\right)\\
\left(s\right) & \mapsto\left(C\left(s\right)\right)\\
\left(\infty\right) & \mapsto\left(\infty\right)
\end{align}
where $A,B$ and $C$ are automorphism in respect to the sum over
$\mathbb{\mathcal{O}}$ and in respect to multiplication they satisfy
\begin{equation}
B\left(s*x\right)=C\left(s\right)*A\left(x\right).
\end{equation}
\end{prop}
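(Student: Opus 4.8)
The plan is to use the fact that a collineation $\varphi\in\Gamma(\triangle,\mathcal{O})$ --- which we read as a bijection of the points of $\overline{\mathscr{A}_{2}}\left(\mathcal{O}\right)$ carrying lines to lines --- fixing each of $\left(0,0\right)$, $\left(0\right)$, $\left(\infty\right)$ must fix, setwise, the three sides of the triangle: the $x$-axis $\left[0,0\right]$ joining $\left(0,0\right)$ and $\left(0\right)$, the $y$-axis $\left[0\right]$ joining $\left(0,0\right)$ and $\left(\infty\right)$, and the line at infinity $\left[\infty\right]$ joining $\left(0\right)$ and $\left(\infty\right)$.

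First I would extract the three maps. Since $\varphi$ restricts to a bijection of the $x$-axis fixing $\left(0,0\right)$ and $\left(0\right)$, it permutes $\left\{(x,0):x\in\mathcal{O}\right\}$ and thus defines a bijection $A\colon\mathcal{O}\to\mathcal{O}$ with $A(0)=0$ by $\varphi(x,0)=(A(x),0)$; likewise the $y$-axis yields $B$ with $\varphi(0,y)=(0,B(y))$, $B(0)=0$, and the line at infinity yields $C$ with $\varphi(s)=(C(s))$, $C(0)=0$. For a general affine point $(x,y)$, I would run it through the vertical line $\left[x\right]$ (which contains $(x,y)$, $(x,0)$, $\left(\infty\right)$): as $\varphi$ fixes $\left(\infty\right)$ and sends $(x,0)$ to $(A(x),0)$, it sends $\left[x\right]$ to $\left[A(x)\right]$, so $\varphi(x,y)$ has first coordinate $A(x)$; dually the horizontal line $\left[0,y\right]$ (containing $(x,y)$, $(0,y)$, $\left(0\right)$) is sent to $\left[0,B(y)\right]$, pinning the second coordinate to $B(y)$. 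Hence $\varphi(x,y)=(A(x),B(y))$, $\varphi(s)=(C(s))$, $\varphi(\infty)=(\infty)$.

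Next I would obtain the multiplicative relation. A line $\left[s,t\right]$ meets the $y$-axis at $(0,t)$ and the line at infinity at $(s)$, so $\varphi(\left[s,t\right])$ is the line through $(0,B(t))$ and $(C(s))$, namely $\left[C(s),B(t)\right]$. Incidence of $(x,\,s*x+t)$ with $\left[s,t\right]$ then forces incidence of $(A(x),\,B(s*x+t))$ with $\left[C(s),B(t)\right]$, i.e.
\begin{equation}
B\left(s*x+t\right)=C\left(s\right)*A\left(x\right)+B\left(t\right)\qquad\text{for all }x,s,t\in\mathcal{O}.
\end{equation}
Setting $t=0$ and using $B(0)=0$ gives the desired identity $B(s*x)=C(s)*A(x)$ (trivially also valid for $s=0$); substituting it back leaves $B(s*x+t)=B(s*x)+B(t)$, and since for fixed $s\neq0$ the map $x\mapsto s*x$ is onto $\mathcal{O}$ (the equation $s*x=b$ being uniquely solvable, $\mathcal{O}$ a division algebra), $B$ is additive.

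Finally, additivity of $A$ and $C$ drops out of $B(s*x)=C(s)*A(x)$ combined with additivity of $B$ and bilinearity of $*$: fixing $x_{0}\neq0$, so $A(x_{0})\neq0$, injectivity of right multiplication by $A(x_{0})$ in the division algebra $\mathcal{O}$ turns $\bigl(C(s_{1}+s_{2})-C(s_{1})-C(s_{2})\bigr)*A(x_{0})=0$ into additivity of $C$; fixing $s_{0}\neq0$ and solving $C(s_{0})*A(x)=B(s_{0}*x)$ as $A(x)=n(C(s_{0}))^{-1}\,B(s_{0}*x)*C(s_{0})$ gives additivity of $A$. I expect the main care to be needed in the degenerate incidences --- vertical and horizontal lines and the points at infinity --- used to prove $\varphi(x,y)=(A(x),B(y))$, while the key structural input is the surjectivity of $x\mapsto s*x$, without which the relation $B(s*x+t)=B(s*x)+B(t)$ could not be upgraded to genuine additivity of $B$.
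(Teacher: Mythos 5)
Your proposal is correct and follows the same route as the paper: fix the three sides of the triangle setwise, use the pencils of vertical and horizontal lines through the fixed points $\left(\infty\right)$ and $\left(0\right)$ to get the split form $\left(x,y\right)\mapsto\left(A\left(x\right),B\left(y\right)\right)$, and then read off the functional equation $B\left(s*x+t\right)=C\left(s\right)*A\left(x\right)+B\left(t\right)$ from incidence with $\left[s,t\right]$. Where you go beyond the paper is in actually closing the additivity argument: the paper ends with an ``if $B$ is additive then\dots, conversely\dots'' that never derives additivity from the collineation hypothesis, and it says nothing about $A$ and $C$. Your steps --- setting $t=0$ with $B\left(0\right)=0$ to get $B\left(s*x\right)=C\left(s\right)*A\left(x\right)$ unconditionally, invoking surjectivity of $x\mapsto s*x$ (division algebra) to upgrade $B\left(s*x+t\right)=B\left(s*x\right)+B\left(t\right)$ to genuine additivity of $B$, and then propagating additivity to $C$ by cancelling $A\left(x_{0}\right)$ and to $A$ by solving $C\left(s_{0}\right)*A\left(x\right)=B\left(s_{0}*x\right)$ with the explicit inverse $A\left(x\right)=n\left(C\left(s_{0}\right)\right)^{-1}B\left(s_{0}*x\right)*C\left(s_{0}\right)$ --- are all sound and supply exactly what the paper's proof leaves implicit.
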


\begin{proof}
A collineation that fixes $\left(0,0\right)$, $\left(0\right)$ and
$\left(\infty\right)$, it also fixes the $x$-axis and $y$-axis
and all lines that are parallel to them. This means that the first
coordinate is the image of a function that does not depend on $y$
and the second coordinate is image of a fuction that does not depend
of $x$, i.e. $\left(x,y\right)\mapsto\left(A\left(x\right),B\left(y\right)\right)$
and $\left(s\right)\mapsto\left(C\left(s\right)\right)$. Now consider
the image of a point on the line $\left[s,t\right]$. The point is
of the form $\left(x,s*x+t\right)$ and its image goes to 
\begin{equation}
\left(x,s*x+t\right)\mapsto\left(A\left(x\right),B\left(s*x+t\right)\right).
\end{equation}
In order this to be a collineation, the points of $\left[s,t\right]$
must all belong to a line that, setting $x=0$, passes through the
points $p_{1}=\left(0,B\left(t\right)\right)$ and $p_{2}=\left(C\left(s\right)\right)$,
e.g. $\left[C\left(s\right),B\left(t\right)\right]$. Every line $\left(A\left(x\right),B\left(s*x+t\right)\right)$
passing through $p_{1}$ and $p_{2}$ must satisfy the condition 
\begin{equation}
B\left(s*x+t\right)=C\left(s\right)*A\left(x\right)+B\left(t\right).\label{eq:Condition B=00003D00003DCA}
\end{equation}
Given (\ref{eq:Condition B=00003D00003DCA}), if $B$ is an automorphism
with respect to the sum over $\mathbb{\mathcal{O}}$, then $B\left(s*x\right)=C\left(s\right)*A\left(x\right)$.
Conversely if $B\left(s*x\right)=C\left(s\right)*A\left(x\right)$
is true than $B\left(s*x+t\right)=B\left(s*x\right)+B\left(t\right)$
and $B$ is an automorphism with respect to the sum. 
\end{proof}
Let us consider the quadrangle $\boxempty$ given by the points $\left(0,0\right)$,
$\left(e,e\right)$, $\left(0\right)$ and $\left(\infty\right)$,
that is $\boxempty=\triangle\cup\left\{ \left(e,e\right)\right\} $,
and consider the collineations that fix the $\boxempty$. Since in
addition to the previous case we also have to impose 
\begin{equation}
\left(e,e\right)\mapsto\left(A\left(e\right),B\left(e\right)\right)=\left(e,e\right),
\end{equation}
and since $e*e=e$, then $C\left(e\right)=e$ and $A=B=C$. Therefore
$A$ is an automorphism of $\mathbb{\mathcal{O}}$. We then have the
following 
\begin{prop}
Collineations that fix every point of $\boxempty$ are transformations
of the type 
\begin{align}
\left(x,y\right) & \mapsto\left(A\left(x\right),A\left(y\right)\right)\\
\left(s\right) & \mapsto\left(A\left(s\right)\right)\\
\left(\infty\right) & \mapsto\left(\infty\right)
\end{align}
where $A$ is an automorphism of $\mathbb{\mathcal{O}}$. 
\end{prop}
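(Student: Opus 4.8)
The plan is to bootstrap from the previous proposition. By that result, every collineation fixing all points of $\triangle$ has the form $(x,y)\mapsto(A(x),B(y))$, $(s)\mapsto(C(s))$, $(\infty)\mapsto(\infty)$ with $A,B,C$ additive bijections of $\mathcal{O}$ satisfying the compatibility relation $B(s*x)=C(s)*A(x)$. Hence the only fresh hypothesis to use is that the fourth vertex $(e,e)$ of $\boxempty$ is fixed, i.e.\ the two normalisations $A(e)=e$ and $B(e)=e$, and the whole problem becomes: extract $A=B=C\in\text{Aut}(\mathcal{O})$ from these together with the compatibility relation.

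I would first pin down $C(e)$: putting $s=x=e$ in $B(s*x)=C(s)*A(x)$ and using $e*e=e$ gives $e=B(e)=C(e)*A(e)=C(e)*e$, so by the Proposition guaranteeing unique solubility of $z*a=b$ (with $a=b=e$, $n(e)=1$) and the fact that $z=e$ already solves $z*e=e$, we get $C(e)=e$. The heart of the argument is then $A=B=C$. Specialising the compatibility relation in each slot, $x=e$ yields $B(s*e)=C(s)*e$ and $s=e$ yields $B(e*x)=e*A(x)$. Since $\mathcal{O}$ is a symmetric composition algebra and $e$ is an idempotent of norm one, left and right multiplication by $e$, say $L_{e}\colon z\mapsto e*z$ and $R_{e}\colon z\mapsto z*e$, are mutually inverse bijections --- this is precisely the identity $(e*z)*e=z=e*(z*e)$ already used to deform $\mathcal{O}$ into the octonions --- so the two specialisations read $C=L_{e}\circ B\circ R_{e}$ and $A=R_{e}\circ B\circ L_{e}$, exhibiting $A$ and $C$ as conjugates of $B$. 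Re-inserting these into $B(s*x)=C(s)*A(x)$ and simplifying with the symmetric-composition identity $x*(e*x)=(x*e)*x=n(x)e$, one is left with a functional identity for $B$ which, in the presence of $B(e)=e$, is solved only when $B$ commutes with $L_{e}$ and $R_{e}$; equivalently $A=C=B$. I expect this last collapsing step to be the main obstacle, since the compatibility relation by itself only determines $A$ and $C$ as $R_{e}$-conjugates of $B$, and turning that into genuine equality is exactly where the special position of $e$ at a vertex of $\boxempty$ and the flexibility of $\mathcal{O}$ have to be exploited in full.

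Once $A=B=C=:\varphi$, the compatibility relation becomes $\varphi(s*x)=\varphi(s)*\varphi(x)$ for all $s,x\in\mathcal{O}$, which together with additivity and bijectivity means precisely that $\varphi\in\text{Aut}(\mathcal{O})$; this gives the stated form of the collineation. For the converse, any $\varphi\in\text{Aut}(\mathcal{O})$ fixing $e$ yields through $(x,y)\mapsto(\varphi(x),\varphi(y))$, $(s)\mapsto(\varphi(s))$, $(\infty)\mapsto(\infty)$ a collineation --- the compatibility relation holds because $\varphi$ is multiplicative, and lines go to lines since $\varphi$ is bijective --- and it fixes $(0,0),(0),(\infty)$ trivially and $(e,e)$ because $\varphi(e)=e$.
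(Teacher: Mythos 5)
You correctly reduce everything to the previous proposition, and the first part of your argument is sound: from $B(s*x)=C(s)*A(x)$ together with $A(e)=B(e)=e$ you get $C(e)=e$, and the specialisations $x=e$, $s=e$ give exactly $C=L_{e}\circ B\circ R_{e}$ and $A=R_{e}\circ B\circ L_{e}$ with $L_{e},R_{e}$ mutually inverse. The problem is the step you yourself flag as ``the main obstacle'': you assert, without proof, that re-inserting these conjugates into the compatibility relation forces $B$ to commute with $L_{e}$ and $R_{e}$. This step is not only missing, it is false. Substituting $C=L_{e}\circ B\circ R_{e}$ and $A=R_{e}\circ B\circ L_{e}$ back in and using $s*x=(s*e)\cdot(e*x)$, where $x\cdot y=(e*x)*(y*e)$ is the deformed octonionic product, the functional identity for $B$ becomes precisely $B(u\cdot v)=B(u)\cdot B(v)$, i.e.\ $B\in\text{Aut}\left(\mathcal{O},\cdot\right)\cong G_{2}$. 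Every $\varphi$ in this $14$-dimensional group yields a triple $B=\varphi$, $C=L_{e}\circ\varphi\circ R_{e}$, $A=R_{e}\circ\varphi\circ L_{e}$ satisfying the compatibility relation and fixing $e$ in all three slots, hence a collineation fixing every point of $\boxempty$; yet $A=B=C$ holds only when $\varphi$ commutes with $L_{e}$ and $R_{e}$, which cuts $G_{2}$ down to an $8$-dimensional subgroup. So fixing the fourth vertex $(e,e)$ does not collapse the triple, and the unitality of the base point ($R_{1}=L_{1}=\mathrm{id}$ in the octonionic case, where the collapse is immediate) is exactly what is being used illegitimately.

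In fairness, the paper's own proof is no better here: after deriving $C(e)=e$ it simply declares ``$A=B=C$'' with no justification, which is the same leap. Your write-up is in fact more honest, since it isolates the precise point of failure, but as it stands neither argument proves the proposition, and the $G_{2}$-family above indicates that the statement itself needs either a stronger hypothesis (fixing enough further points to force $B$ to commute with $L_{e}$ and $R_{e}$) or a weaker conclusion (the stabiliser of $\boxempty$ being a copy of $\text{Aut}\left(\mathcal{O},\cdot\right)\cong G_{2}$ rather than $\text{Aut}\left(\mathcal{O},*\right)$). Your converse direction (automorphisms of $\left(\mathcal{O},*\right)$ do give collineations fixing $\boxempty$) is correct but only shows containment one way.
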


\begin{cor}
The group of collineations $\Gamma\left(\boxempty,\mathcal{O}\right)$
that fix $\left(0,0\right)$, $\left(e,e\right)$, $\left(0\right)$
and $\left(\infty\right)$ is isomorphic to $\text{Aut}\left(\mathcal{O}\right)\cong SL_{3}\left(\mathbb{R}\right)$. 
\end{cor}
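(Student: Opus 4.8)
The plan is to upgrade the preceding Proposition, which already describes the elements of $\Gamma\left(\boxempty,\mathcal{O}\right)$ individually, into a statement about the full group structure. Concretely, I would introduce the map $\Phi\colon\text{Aut}\left(\mathcal{O}\right)\to\Gamma\left(\boxempty,\mathcal{O}\right)$ sending an automorphism $A$ to the transformation $\varphi_{A}$ acting on the three kinds of coordinates by $\left(x,y\right)\mapsto\left(A\left(x\right),A\left(y\right)\right)$, $\left(s\right)\mapsto\left(A\left(s\right)\right)$ and $\left(\infty\right)\mapsto\left(\infty\right)$, and then prove that $\Phi$ is a well-defined group isomorphism. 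Surjectivity of $\Phi$ is exactly what the Proposition just established, so the remaining work is threefold: (i) for every $A\in\text{Aut}\left(\mathcal{O}\right)$ the map $\varphi_{A}$ is genuinely a collineation fixing $\boxempty$, i.e. $\Phi$ is well defined; (ii) $\Phi$ is injective; (iii) $\Phi$ is a homomorphism.

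For (i) I would use that an automorphism $A$ of $\mathcal{O}$ is additive, multiplicative, bijective, and — by the argument recalled in Section 2 (via the identity $\left(x*y\right)*x=n\left(x\right)y$) — automatically norm preserving. Applying $\varphi_{A}$ to a generic point $\left(x,s*x+t\right)$ of the affine line $\left[s,t\right]$ yields $\left(A\left(x\right),A\left(s*x+t\right)\right)=\left(A\left(x\right),A\left(s\right)*A\left(x\right)+A\left(t\right)\right)$, whence $\varphi_{A}\left(\left[s,t\right]\right)=\left[A\left(s\right),A\left(t\right)\right]$; similarly $\varphi_{A}\left(\left[c\right]\right)=\left[A\left(c\right)\right]$ and $\varphi_{A}\left(\left[\infty\right]\right)=\left[\infty\right]$, using bijectivity of $A$ so that every slope and every intercept is hit. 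Thus $\varphi_{A}$ sends lines to lines. Moreover $\varphi_{A}$ fixes $\triangle$ since additivity forces $A\left(0\right)=0$ (so $\left(0,0\right)$, $\left(0\right)$ and $\left(\infty\right)$ are fixed), and it fixes $\left(e,e\right)$ because $A\left(e\right)=e$; hence $\varphi_{A}\in\Gamma\left(\boxempty,\mathcal{O}\right)$.

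For (ii), if $\varphi_{A}=\varphi_{A'}$ then reading off the first affine coordinate gives $A\left(x\right)=A'\left(x\right)$ for all $x\in\mathcal{O}$, so $A=A'$ — this simultaneously records that the automorphism attached to a collineation of $\boxempty$ is unique. For (iii), composing the coordinate formulas coordinate-type by coordinate-type gives $\varphi_{A}\circ\varphi_{A'}=\varphi_{A\circ A'}$, so $\Phi$ is a group homomorphism. Combining (i)–(iii) with surjectivity yields $\Gamma\left(\boxempty,\mathcal{O}\right)\cong\text{Aut}\left(\mathcal{O}\right)$, and the identification of $\text{Aut}\left(\mathcal{O}\right)$ recalled in Section 2 supplies the claimed matrix description.

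There is no deep obstacle here: essentially all the content already lives in the preceding Proposition. The one step that deserves genuine care — and that I would expect a referee to check closely — is (i): one must verify the collineation property on all three families of lines (the affine lines $\left[s,t\right]$, the vertical lines $\left[c\right]$, and the line at infinity $\left[\infty\right]$) and confirm that the induced action on slopes, $\left(s\right)\mapsto\left(A\left(s\right)\right)$, is consistent, so that parallel lines map to parallel lines. The only non-formal ingredient entering this verification is that an additive, multiplicative automorphism of $\mathcal{O}$ necessarily preserves the norm $n$, and this was already settled in Section 2.
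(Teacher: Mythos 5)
Your strategy (build the explicit map $\Phi\colon A\mapsto\varphi_{A}$ and check that it is a well-defined bijective homomorphism onto $\Gamma\left(\boxempty,\mathcal{O}\right)$) is the natural one, and the paper offers no proof of this corollary at all, so there is no step-by-step comparison to make. However, your step (i) contains a genuine gap, and it sits at the one point you did not flag as delicate: you assert that $\varphi_{A}$ fixes $\left(e,e\right)$ ``because $A\left(e\right)=e$''. An automorphism of $\mathcal{O}$ only sends idempotents to idempotents, and $e$ is not the unique nonzero idempotent: solving $e*e=e$ shows that the nonzero idempotents are exactly the traceless Hermitian matrices with spectrum $\left\{ 2,-1,-1\right\} $, and the conjugations $x\mapsto uxu^{*}$ with $u\in SU\left(3\right)$ are automorphisms of $\left(\mathcal{O},*\right)$ (the trace term is conjugation-invariant) which act transitively on this set. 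Hence the stabilizer of $e$ in $\text{Aut}\left(\mathcal{O}\right)$ is a proper subgroup (the image of $S\left(U\left(1\right)\times U\left(2\right)\right)$), $\Phi$ lands only in $\Gamma\left(\triangle,\mathcal{O}\right)$ rather than in $\Gamma\left(\boxempty,\mathcal{O}\right)$, and what your argument actually establishes is
\begin{equation*}
\Gamma\left(\boxempty,\mathcal{O}\right)\cong\left\{ A\in\text{Aut}\left(\mathcal{O}\right):A\left(e\right)=e\right\} ,
\end{equation*}
not $\Gamma\left(\boxempty,\mathcal{O}\right)\cong\text{Aut}\left(\mathcal{O}\right)$.

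To be fair, the difficulty is inherited from the statement itself: the preceding Proposition derives $A\left(e\right)=e$ as a necessary condition on the automorphism attached to a collineation fixing $\boxempty$, so the corollary can only hold if every automorphism fixes $e$, which is false; moreover the identification with $SL_{3}\left(\mathbb{R}\right)$ contradicts the paper's own earlier claim $\text{Aut}\left(\mathcal{O}\right)\cong SU\left(3\right)$. A repaired version of your proof would restrict the domain of $\Phi$ to the stabilizer of $e$ in $\text{Aut}\left(\mathcal{O}\right)$ (everything else in your (i)--(iii) then goes through verbatim), but it would prove a different, weaker statement than the one asserted. As written, the claim $A\left(e\right)=e$ for all $A\in\text{Aut}\left(\mathcal{O}\right)$ is the step that fails.
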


We are now interested in studing the Lie algebras of the previous
groups. 
\begin{prop}
The Lie algebra $Lie\left(\Gamma\left(\triangle,\mathcal{O}\right)\right)$
of the group of collineation that fixes $\left(0,0\right),\left(0\right)$\textup{\emph{and}}\textup{
$\left(\infty\right)$ }\textup{\emph{is}}\emph{ } 
\begin{equation}
\mathfrak{tri}\left(\mathcal{O}\right)=\left\{ \left(T_{1},T_{2},T_{3}\right)\in\mathfrak{so}\left(\mathcal{O}\right)^{3}:T_{1}\left(x*y\right)=T_{2}\left(x\right)*y+x*T_{3}\left(y\right)\right\} ,
\end{equation}
while the Lie algebra $Lie\left(\Gamma\left(\boxempty,\mathcal{O}\right)\right)$
of the group of collineation that fixes $\left(0,0\right)$, $\left(e,e\right)$,
$\left(0\right)$ and $\left(\infty\right)$is

\begin{equation}
\mathfrak{der}\left(\mathcal{O}\right)=\left\{ A\in\mathfrak{so}\left(\mathcal{O}\right):A\left(x*y\right)=A\left(x\right)*y+x*A\left(y\right)\right\} .
\end{equation}
\end{prop}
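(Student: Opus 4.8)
The plan is to differentiate, at the identity, the two explicit parametrisations of $\Gamma\left(\triangle,\mathcal{O}\right)$ and $\Gamma\left(\boxempty,\mathcal{O}\right)$ furnished by the preceding propositions. The smaller group is immediate: an element of $\Gamma\left(\boxempty,\mathcal{O}\right)$ is, by the corollary, an automorphism $A$ of $\mathcal{O}$, so along a smooth one-parameter subgroup $A_{\tau}$ with $A_{0}=\text{id}$ one sets $A\coloneqq\dot{A}_{0}$ and differentiates $A_{\tau}\left(x*y\right)=A_{\tau}\left(x\right)*A_{\tau}\left(y\right)$ at $\tau=0$; the Leibniz rule for the bilinear product $*$ gives the derivation identity $A\left(x*y\right)=A\left(x\right)*y+x*A\left(y\right)$, and since every automorphism of $\mathcal{O}$ preserves the norm $n$ (as shown above), $\text{Aut}\left(\mathcal{O}\right)\subseteq O\left(\mathcal{O}\right)$ and hence $A\in\mathfrak{so}\left(\mathcal{O}\right)$, so $A\in\mathfrak{der}\left(\mathcal{O}\right)$. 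Conversely every $A\in\mathfrak{der}\left(\mathcal{O}\right)$ exponentiates to the one-parameter group $\exp\left(\tau A\right)\subseteq\text{Aut}\left(\mathcal{O}\right)=\Gamma\left(\boxempty,\mathcal{O}\right)$, which gives the reverse inclusion and hence $Lie\left(\Gamma\left(\boxempty,\mathcal{O}\right)\right)=\mathfrak{der}\left(\mathcal{O}\right)$.

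For $\Gamma\left(\triangle,\mathcal{O}\right)$ the same mechanism applies to the triple description. An element is $\left(A,B,C\right)$ with $A,B,C$ additive automorphisms of $\mathcal{O}$ satisfying $B\left(s*x\right)=C\left(s\right)*A\left(x\right)$; along a smooth one-parameter subgroup through the identity these maps are $\mathbb{R}$-linear with $A_{0}=B_{0}=C_{0}=\text{id}$, and putting $T_{1}\coloneqq\dot{B}_{0}$, $T_{2}\coloneqq\dot{C}_{0}$, $T_{3}\coloneqq\dot{A}_{0}$ and differentiating $B_{\tau}\left(s*x\right)=C_{\tau}\left(s\right)*A_{\tau}\left(x\right)$ at $\tau=0$ by the Leibniz rule yields
\[
T_{1}\left(s*x\right)=T_{2}\left(s\right)*x+s*T_{3}\left(x\right),
\]
which, after renaming the two factors, is precisely the relation defining $\mathfrak{tri}\left(\mathcal{O}\right)$. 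For the reverse inclusion one uses that, by \cite{ElduQueAut}, $\mathfrak{tri}\left(\mathcal{O}\right)$ is the Lie algebra of $\text{Spin}\left(\mathcal{O}\right)$, whose elements are triples of isometries obeying this same relation and therefore act on $\mathbb{P}^{2}\mathcal{O}$ as collineations fixing $\triangle$.

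The step that needs genuine care --- and the one I expect to be the main obstacle --- is showing $T_{1},T_{2},T_{3}\in\mathfrak{so}\left(\mathcal{O}\right)$ and not merely in $\mathfrak{gl}\left(\mathcal{O}\right)$: the relation $B\left(s*x\right)=C\left(s\right)*A\left(x\right)$ by itself is satisfied also by the dilations $\left(A,B,C\right)=\left(\mu\,\text{id},\rho\mu\,\text{id},\rho\,\text{id}\right)$, whose generators are not skew, so the skew-symmetry has to come from somewhere. The natural source is the elliptic polarity $\pi$: the triangle $\triangle$ is \emph{self-polar} for $\pi$ --- a direct check from the point--line correspondence of the previous section shows that the polar of each vertex of $\triangle$ is the side through the other two --- and a short computation of the quadratic form $q$ on the Veronese image of an affine point shows that a collineation in $\Gamma\left(\triangle,\mathcal{O}\right)$ preserves $\pi$ exactly when $A,B,C$ are norm isometries, in which case the $T_{i}$ are skew for the polar form and $\left(T_{1},T_{2},T_{3}\right)\in\mathfrak{tri}\left(\mathcal{O}\right)$. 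Alternatively one may invoke the local triality principle for symmetric composition algebras \cite{KMRT,ElDuque Comp} and deduce $\left\langle T_{i}\left(x\right),x\right\rangle =0$ straight from the displayed relation, using the polarised composition identities $\left\langle u*v,u*w\right\rangle =n\left(u\right)\left\langle v,w\right\rangle$ and $\left\langle u*w,v*w\right\rangle =n\left(w\right)\left\langle u,v\right\rangle$. Either way, together with the inclusions above this identifies $Lie\left(\Gamma\left(\triangle,\mathcal{O}\right)\right)$ with $\mathfrak{tri}\left(\mathcal{O}\right)$, the Lie algebra of $\text{Spin}\left(\mathcal{O}\right)$.
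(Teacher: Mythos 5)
Your core computation is the same as the paper's: both proofs linearise the compatibility condition $A\left(x*y\right)=B\left(x\right)*C\left(y\right)$ at the identity (the paper writes $\left(A,B,C\right)=\left(\text{Id}+\epsilon T_{1},\text{Id}+\epsilon T_{2},\text{Id}+\epsilon T_{3}\right)$ and sets $\epsilon^{2}=0$, you differentiate a one-parameter subgroup) to obtain $T_{1}\left(x*y\right)=T_{2}\left(x\right)*y+x*T_{3}\left(y\right)$, and both treat $\mathfrak{der}\left(\mathcal{O}\right)$ by specialising to $T_{1}=T_{2}=T_{3}$. On the main line you and the paper agree.

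Where you go beyond the paper is the skew-symmetry of the $T_{i}$, and your instinct there is right: the paper simply declares $T_{1},T_{2},T_{3}\in\mathfrak{so}\left(\mathcal{O}\right)$ with no justification, while the preceding proposition characterising $\Gamma\left(\triangle,\mathcal{O}\right)$ imposes only additivity and $B\left(s*x\right)=C\left(s\right)*A\left(x\right)$. As you observe, the dilations $\left(A,B,C\right)=\left(\mu\,\text{id},\rho\mu\,\text{id},\rho\,\text{id}\right)$ satisfy all of that and fix $\triangle$, so some extra input is genuinely needed. However, your two proposed remedies are not on an equal footing. The second one --- deducing $\left\langle T_{i}\left(x\right),x\right\rangle =0$ ``straight from the displayed relation'' --- cannot work, and your own example shows why: the generators $\left(\left(a+b\right)\text{id},\,a\,\text{id},\,b\,\text{id}\right)$ of the dilations satisfy $T_{1}\left(x*y\right)=T_{2}\left(x\right)*y+x*T_{3}\left(y\right)$ and are not skew, so the triality identity alone does not imply skewness; keeping that branch would leave your argument self-contradictory. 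Only the first remedy is viable: one must restrict to collineations compatible with the metric structure (equivalently, preserving the elliptic polarity, or lying in $O^{+}\left(\mathcal{O}\right)^{3}$ as in the paper's definition of $\text{Spin}\left(\mathcal{O}\right)$), and then skewness enters as part of the hypothesis rather than as a consequence. That is in effect what the paper does tacitly; if you present the proof, drop the ``alternative'' branch and state explicitly that $\Gamma\left(\triangle,\mathcal{O}\right)$ is taken inside the isometries, since otherwise its Lie algebra is strictly larger than $\mathfrak{tri}\left(\mathcal{O}\right)$.
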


\begin{proof}
$\Gamma\left(\triangle,\mathcal{O}\right)$ is a Lie group since it
is a closed subgroup of the Lie group of collineations. We will find
directly its Lie algebra considering the elements $A,B,C\in\Gamma\left(\triangle,\mathcal{O}\right)$
in a neighbourhood of the identity and writing them as 
\[
\left(A,B,C\right)\longrightarrow\left(\text{Id}+\epsilon T_{1},\text{Id}+\epsilon T_{2},\text{Id}+\epsilon T_{3}\right)
\]
where $T_{1},T_{2},T_{3}\in\mathfrak{so}\left(\mathcal{O}\right)$.
Imposing the condition $A\left(x*y\right)=B\left(x\right)*C\left(y\right)$
and then we obtain 
\begin{align}
\left(\text{Id}+\epsilon T_{1}\right)\left(x*y\right) & =\left(\text{Id}+\epsilon T_{2}\right)\left(x\right)*\left(\text{Id}+\epsilon T_{3}\right)\left(x\right),
\end{align}
which, considering $\epsilon^{2}=0$, yields to 
\begin{equation}
T_{1}\left(x*y\right)=T_{2}\left(x\right)*y+x*T_{3}\left(y\right).
\end{equation}
As for the second part of the theorem, it suffices to impose $T_{1}=T_{2}=T_{3}$. 
\end{proof}
Defining the Spin group on Okubo algebras $\text{Spin}\left(\mathcal{O}\right)$,
following \cite{ElDuque Comp}, as 
\begin{equation}
\text{Spin}\left(\mathcal{O}\right)=\left\{ \left(A,B,C\right)\in O^{+}\left(\mathcal{O}\right)^{3}:A\left(x*y\right)=B\left(x\right)*C\left(y\right)\,\,\,\forall x,y\in\mathcal{O}\right\} ,
\end{equation}
where $O^{+}\left(\mathcal{O}\right)$ is the connected component
with the identity, we then recover the identification 
\begin{align}
\Gamma\left(\triangle,\mathcal{O}\right)\cong & \text{Spin}\left(\mathcal{O}\right),\\
\Gamma\left(\boxempty,\mathcal{O}\right)\cong & \text{Aut}\left(\mathcal{O}\right)\cong SU\left(3\right),
\end{align}
and, passing to Lie algebras, we obtain 
\begin{align}
Lie\left(\Gamma\left(\triangle,\mathcal{O}\right)\right)\cong & \mathfrak{tri}\left(\mathcal{O}\right)\cong\mathfrak{su}\left(3\right),\\
Lie\left(\Gamma\left(\boxempty,\mathcal{O}\right)\right)\cong & \mathfrak{der}\left(\mathcal{O}\right)\cong\mathfrak{su} \left(3\right).
\end{align}
We then have a perfect analogy with the Octonionic case where the
subgroups $\Gamma\left(\triangle,\mathbb{O}\right)$ and $\Gamma\left(\boxempty,\mathbb{O}\right)$
are given by

\begin{align}
\Gamma\left(\triangle,\mathbb{O}\right)\cong & \text{Spin}\left(\mathbb{O}\right)\cong\text{Spin}_{8}\left(\mathbb{R}\right),\\
\Gamma\left(\boxempty,\mathbb{O}\right)\cong & \text{Aut}\left(\mathbb{O}\right)\cong\text{G}_{2\left(-14\right)},
\end{align}
and the respective Lie algebras are identified with 
\begin{align}
Lie\left(\Gamma\left(\triangle,\mathbb{O}\right)\right)\cong & \mathfrak{tri}\left(\mathbb{O}\right)\cong\mathfrak{so}\left(\mathbb{\mathbb{O}}\right),\\
Lie\left(\Gamma\left(\boxempty,\mathbb{O}\right)\right)\cong & \mathfrak{der}\left(\mathbb{O}\right)\cong\mathfrak{g}_{2}.
\end{align}

\section{Conclusions and future developments}

In this work we defined for the first time an affine and projective
plane over the Okubo algebra. It is mesmerizing and surprising how
the analogy between the alternative algebra of Octonions and the flexible
Okubo algebra works perfectly with only few changes. Even though,
these new affine and projective planes share a numerous proprieties
with the classic planes, it is also worth noting that their geometry
is far to be completey settled. Jordan algebras over three by three
matrices have a perfect analogy with the projective plane over Octonions.
It would be then definitely interesting to analyze is an equivalent
structure can be realized in the case of Okubo's algebras. Moreover,
interesting geometries arise from the complex Okubo algebra which
is not a division algebra but whose divisors of zero enjoy interesting
projective proprieties that we plan to cover in our next work.

\section{Acknowledgment}

\bigskip{}

$\dagger$\noun{Departamento de Matemática, }\\
\noun{Universidade do Algarve, }\\
\noun{Campus de Gambelas, }\\
\noun{8005-139 Faro, Portugal} 
\begin{verbatim}
a55499@ualg.pt.
\end{verbatim}
$\ddagger$\noun{Dipartimento di Scienze Matematiche, Informatiche
e Fisiche, }\\
\noun{Università di Udine, }\\
\noun{Udine, 33100, Italy} 
\begin{verbatim}
francesco.zucconi@uniud.it
\end{verbatim}

\end{document}